\definecolor{babyblueeyes}{rgb}{0.63, 0.79, 0.95}
\newtheorem{theorem}{Theorem}[section]
\newtheorem{proposition}[theorem]{Proposition}
\theoremstyle{definition}
\newtheorem{definition}[theorem]{Definition}
\theoremstyle{remark}
\newtheorem{remark}[theorem]{Remark}
\newcommand{\numberset}{\mathbb}
\newcommand{\K}{\numberset{K}}
\newcommand{\Z}{\numberset{Z}}
\newcommand{\C}{\numberset{C}}
\newcommand{\cat}[1]{\mathsf{#1}}%M:ho ceduto sugli hom;E:cedo su sf
\newcommand{\Abtfold}{\K[x_{2L},x_{2R}]_{\{x_{2L}x_{2R}\}}}
\newcommand{\PPP}{e_{1L}e_{1R}}
\title{A very short note on the (rational) graded Hori map}
\author{Mattia Coloma}
\address{Università degli Studi di Roma "Tor Vergata"; Dipartimento di Matematica, Via della Ricerca Scientifica, 1 - 00133 - Roma, Italy; 
}
\email{coloma@mat.uniroma2.it}
\author{Domenico Fiorenza}
\address{Sapienza Universit\`a di Roma; Dipartimento di Matematica ``Guido Castelnuovo'', P.le Aldo Moro, 5 - 00185 - Roma, Italy; 
}
\email{fiorenza@mat.uniroma1.it}
\author{Eugenio Landi}
\address{Università di Roma Tre; Dipartimento di Matematica e Fisica Largo San Leonardo Murialdo, 1 - 00146 - Roma, Italy;
}
\email{eugenio.landi@uniroma3.it}
\begin{document}

\begin{abstract}
The graded Hori map has been recently introduced by Han-Mathai in the context of T-duality as a $\mathbb{Z}$-graded transform whose homogeneous components are the Hori-Fourier transforms in twisted cohomology associated with integral multiples of a basic pair of T-dual closed 3-forms. We show how in the rational homotopy theory approximation of T-duality, such a map is naturally realised as a pull-iso-push transform, where the isomorphism part corresponds to the canonical equivalence between the left and the right gerbes associated with a T-duality configuration. 
\end{abstract}
	\maketitle

\tableofcontents

\section{Foreword}
The graded Hori map has been recently introduced in \cite{Han-Mathai}, by assembling together the $\mathbb{Z}$-family of Hori maps associated with a certain $\mathbb{Z}$-family of T-duality configuration data naturally associated to a single T-duality configuration. This may at first sight appear as a rather ad hoc construction. The aim of this note is to show how, on the contrary, the graded Hori map as a whole naturally emerges from the geometry associated with a T-duality configuration. One only needs to look at a step higher with respect to the T-dual bundles: the graded Hori map is a manifestation of a canonical equivalence between the left and the right gerbes associated with a T-duality configuration. More precisely, we show that, 
in the rational homotopy theory approximation of T-duality, such a map is naturally realised as a pull-iso-push transform, where the isomorphism part corresponds to the left gerbe/right gerbe canonical equivalence.
\par
We will construct this pull-iso-push transform using only purely algebraic constructions related to the category $\cat{DGCA}$ of differential graded commutative algebras (DGCAs) over a characteristic zero field $\mathbb{K}$, which can be assumed to be the field $\mathbb{Q}$ of rational numbers. In particular, we will heavily use the language of extensions of DGCAs associated with DGCA cocycles. The reader familiar with rational homotopy theory will immediately recognise every step in the construction we are going to present as a translation of phenomena appearing in the rational homotopy theory approximation of T-duality. We point the unfamiliar reader to \cite{fiorenza2017tduality} for an introduction very close to the spirit of this note. We also borrow from \cite{fiorenza2016tduality-cocycles} the rational homotopy theory description of the equivalence of the gerbes associated to a T-duality configuration. See \cite{bunke-schick} for the topological origin of this equivalence.  
Here we choose to present the construction in purely algebraic terms, leaving to the reader the job of connecting to rational homotopy theory.
\par
The note is organised as follows. First we review topological $T$-duality in rational homotopy theory, in particular, in Section 2 we recall a few basic constructions on extensions of DGCAs and define the DGCA (co)classifying rational $T$-duality configurations, and in Section 3 we recall the definition of the two isomorphic rational gerbes associated with a rational $T$-duality configuration, whose isomorphism will be the ``iso'' part in the ``pull-iso-push'' transform.

After this review, in Section 4 we define the graded Hori map $\mathcal{T}_{L\to R}$ associated with these data and extend it to Laurent series. In Section 5 we show how, when the base field is the field $\mathbb{C}$ of complex numbers, this allows one to describe the graded Hori map as an operator on rings of meromorphic functions with a single pole at the origin taking values in a DGCA $A_0$ endowed with a rational T-duality configuration. It turns out that in this translation the graded Hori map becomes the antidiagonal matrix 
\[
\left(\begin{matrix}
 0 & 1 \\
-q\frac{d}{dq} & 0  \end{matrix}\right) 
\]
where $q$ is the complex coordinate on $\mathbb{C}$. Finally, in Section 6 we show how one can further extend coefficients to $A_0$-valued index $0$ Jacobi forms in the two variables $(z,\tau)\in \mathbb{C}\times \mathbb{H}$, by means of their $q$-expansion, where $q=e^{2\pi i z}$. This way we recover the original definition of the graded Hori map by Han-Mathai, as well as its main properties. In particular, one identifies the graded Hori map on Jacobi forms with the antidiagonal matrix 
\[
\left(\begin{matrix}
 0 & 1 \\
-\frac{1}{2\pi i}\frac{\partial}{\partial z} & 0  \end{matrix}\right),
\]
and therefore the composition of two graded Hori transforms as the operator $-\frac{1}{2\pi i}\frac{\partial}{\partial z}$ on the ring of $A_0$-valued index $0$ Jacobi-forms \cite[Theorem 2.2]{Han-Mathai}.

\section{Cocycles and extensions of DGCAs}
We start with a (non-negatively graded) differential graded commutative algebra $(A,d)$ over the field $\K$ and with a $2$-cocycle, i.e.,  a closed homogeneous element of degree $2$, $t_2\in A$. We can extend our base DGCA $A$ in such a way to trivialise the $2$-cocycle $t_2$ by adding a formal generator $e_1$ of degree $1$ and declaring our extension to be
\[
\begin{tikzcd}
{(A,d)} \arrow[r, "\iota", hook] & A_{\{t_2\}}:={(A[e_1], de_1 = t_2), }
\end{tikzcd}
\]
where the differential of $A_{\{t_2\}}$ coincides with the differential $d$ on the subalgebra $A$.
\par
The choice of a $2$-cocycle for the DGCA $A$ is the same datum as a DGCA map from the polynomial DGCA $(\K[x_2],0)$ to $A$, where $(\K[x_2],0)$ is the polynomial algebra over $\K$ on a single degree 2 generator $x_2$, endowed with the trivial differential. This in turn means regarding $A$ as an object under $(\K[x_2],0)$, a point of view that will be useful later.
\par
More generally, the datum of a DGCA map from $(\K[x_n],0)$ to $A$, where now $x_n$ is a degree $n$ variable, is the same as that of an $n$-cocycle in $A$ and, again, given such a cocycle $t_n \in A$ it is possible to extend $A$ to trivialise $t_n$ by \[
\begin{tikzcd}
{(A,d)} \arrow[r, "\iota", hook] & A_{\{t_n\}}:={(A[e_{n-1}], de_{n-1} := t_n) }.
\end{tikzcd}
\]
\par
The construction of $A_{\{t_n\}}$ out of the pair $(A,t_n)$ is universal: $A_{\{t_n\}}$ together with the embedding of the sub-DGCA $A$ is
the homotopy cofibre of $t_n$, i.e., the homotopy pushout of the diagram:

\[
\begin{tikzcd}
\K[x_{n}]\arrow[d]\arrow[r,"\psi_{t_n}"] & \left(A,d\right)\\
0 &
\end{tikzcd}
\]
of DGCAs, where $\psi_{t_n}$ is the unique DGCA morphism with $\psi(x_n)=t_n$, in the projective model structure on non-negatively graded DGCAs, see, e.g.,  \cite{bousfield-gugenheim}.
Indeed, in order to compute a model for this cofibre one has to replace the vertical map by a cofibration followed by a weak equivalence, and the easiest way of doing this is to consider 
\[
\K[x_{n}]\hookrightarrow \left(\K[x_{n},e_{n-1}], de_{n-1}=x_{n}\right)\cong 0,
\]
and then compute the ordinary pushout of the diagram
\[
\begin{tikzcd}
\K[x_{n}]\arrow[d]\arrow[r,"\psi_{t_n}"] & \left(A,d\right)\\
\left(\K[x_{n},e_{n-1}], de_{n-1}=x_{n}\right)
\end{tikzcd}
\]
to obtain 
\[
\begin{tikzcd}
\K[x_{n}]\arrow[d]\arrow[r,"\psi_{t_n}"] & \left(A,d\right) \arrow[d]\\
\left(\K[x_{n},e_{n-1}], de_{n-1}=x_{n}\right)\arrow[r] & (P,d_P),
\end{tikzcd}
\]
with
\begin{align*}
    \left(P,d_P\right)&=\left(A[e_{n-1}],d_Pa=da\text{ for }a\in A, d_Pe_{n-1}=\psi_{t_n}(x_{n})\right)\\
    &=\left(A[e_{n-1}], de_{n-1}=t_{n}\right)=A_{\{t_n\}}.
    \end{align*}

Universality implies in particular that the construction $(A,t_n)\rightsquigarrow A_{\{t_n\}}$ is natural, a fact that can also be easily checked directly: if $f\colon (A,t_n)\to (B,s_n)$ is a morphism of DGCAs endowed with $n$-cocycles, i.e., if $f$ is a morphism of DGCAs, $f\colon A\to B$, such that $f(t_n)=s_n$, then we get a morphism of DGCAs $\hat{f}\colon A_{\{t_n\}}\to B_{\{s_n\}}$ by setting $\hat{f}(a)=f(a)$ for any $a\in A$ and $\hat{f}(e_{n-1; A})=e_{n-1;B}$. This is manifestly compatible with compositions of morphisms of DGCAs endowed with $n$-cocycles.

\begin{remark}\label{rem:projection}
If $n$ is even, every degree $k$ element $a_k$ in $A_{\{t_n\}}$ can be uniquely written as $a_k=\alpha_k+e_{n-1} \beta_{k-n+1}$, for some degree $k$ element $\alpha_k$ and some degree $k-n+1$ element $\beta_{k-n+1}$ in $A$. The map 
\begin{align*}
    \pi\colon A_{\{t_n\}} &\to A[-n+1]\\
    \alpha_k+e_{n-1} \beta_{k-n+1}&\mapsto \beta_{k-n+1}
\end{align*}
is a map of chain complexes. Namely, we have
\begin{align*}
d_{[-n+1]}(\pi(a_k)&=d_{[-n+1]}(\pi(\alpha_k+e_{n-1} \beta_{k-n+1}))\\
&=d_{[-n+1]}\beta_{k-n+1}\\
&=(-1)^{(n-1)}d\beta_{k-n+1}
\end{align*}
and
\begin{align*}
\pi(d a_k)&=\pi(d(\alpha_k+e_{n-1} \beta_{k-n+1}))\\
&=\pi(d\alpha_k +t_n\beta_{k-n+1}
+(-1)^{n-1}d\beta_{k-n+1})\\
&=(-1)^{n-1}d\beta_{k-n+1}.
\end{align*}
Of course, $\pi$ is not a map of DGCAs (the shifted complex $A[-n+1]$ does not even have a natural DGCA structure). But it is a map of right DG-$A$-modules: if $\gamma_l$ is a degree $l$ element in $A$, then 
\[
\pi(a_k\gamma_l)=\pi((\alpha_k+t_n \beta_{k-n})\gamma_l)=\pi((\alpha_k\gamma_l)+t_n(\beta_{k-n}\gamma_l)=\beta_{k-n}\gamma_l=\pi(a_k)\gamma_l.
\]
As a side remark, by thinking of $\iota\colon A\to A_{\{t_n\}}$ as a pullback $p^*$ and of $\pi\colon A_{\{t_n\}} \to A[-n+1]$ as the pushforward $p_*$, the above identity is the projection formula:
\[
p_*(a_k p^*(\gamma_l))=p_*(a_k)\gamma_l.
\]
Finally, the map of right DG-$A$-modules $\pi\colon A_{\{t_n\}} \to A[-n+1]$ has an evident section 
\[
e_{n-1}\cdot -\colon A[-n+1] \to A_{\{t_n\}}
\]
given by the left multiplication by $e_{n-1}$.
\end{remark}

\medskip 

An example of the  construction $(A,t_n)\rightsquigarrow A_{\{t_n\}}$ we will be interested in is the following. Consider the polynomial algebra 
\[
\K[x_{2L},x_{2R}]\cong \K[x_{2L}]\otimes \K[x_{2R}]
\]
on two degree 2 generators $x_{2L}$ and $x_{2R}$, endowed with the trivial differential.\footnote{Here and below, all tensor products are over $\mathbb{K}$.} Then the element $x_{2L}x_{2R}$ is a 4-cocycle and so defines a DGCA map
\begin{align*}
\K[t_4]&\to \K[x_{2L},x_{2R}]\\
t_4&\mapsto x_{2L}x_{2R}
\end{align*}
The associated extension is the DGCA 
\[
\K[x_{2L},x_{2R}]_{\{x_{2L}x_{2R}\}}=(\K[x_{2L},x_{2R},y_3], dx_{2L}=dx_{2R}=0, dy_3=x_{2L}x_{2R})
\]
Notice that $\Abtfold$ carries two distinguished 2-cocycles $x_{2L}$ and $x_{2R}$ and that
$\sigma\colon x_{2L}\leftrightarrow x_{2R}$ is a DGCA automorphism of $\Abtfold$ exchanging the two cocycles. We denote by $p_L,p_R\colon \K[x_2]\to \Abtfold$ the two maps corresponding to the cocycles $x_{2L},x_{2R}$, respectively.

\section{Two equivalent rational gerbes}
In order to get the DGCA construction corresponding to the rational homotopy description of the pull-iso-push transform between gerbes associated with a T-duality configuration, we consider a DGCA $A$ together with a map $\Abtfold \overset{f}{\to} A$. As we noticed above, the source of $f$ has two distinct $2$-cocycles corresponding to maps $p_L, p_R: \K[x_2] \to \Abtfold$ sending the generator $x_2$ in $x_{2L}$ and in $x_{2R}$, respectively. Composing with the map $f$ we therefore get maps $f_L,f_R: \K[x_2] \to A$, corresponding to two distinct $2$-cocycles in $A$, and we end up with following commutative diagram of DGCAs:
\[
\begin{tikzcd}[row sep=large, column sep=small]
  &                                                          & {\K[x_2]} 
  \arrow[ ldd, "p_L"'] \arrow[rdd, "p_R"] \arrow[bend right=20,lld, "f_L"'] \arrow[bend left=20,rrd, "f_R"] &                                      &   \\
A &                                                          &                                                                                         &                                      & A \\
  & \Abtfold \arrow[rr, "\sigma",leftrightarrow] \arrow[bend left=10,lu, "f"] &                                                                                         & \Abtfold \arrow[bend right=10, ru, "f"'] &  
\end{tikzcd}
\]
The previous diagram shows that the map $f$ can be read in two different ways as a map in the undercategory ${}^{\K[x_2]/}\cat{DGCA}$ of DGCAs endowed with a distinguished $2$-cocycle, i.e., with a distinguished morphism from $\K[x_2]$. In particular, we have that $f$ is both a map between $\Abtfold$ and $A$ decorated with their left $2$-cocycles
\[
\begin{tikzcd}
{(\Abtfold,p_L)} \arrow[r, "f"] & {(A,f_L)},
\end{tikzcd}
\]
and with their right $2$-cocycles
\[
\begin{tikzcd}
{(\Abtfold,p_R)} \arrow[r, "f"] & {(A,f_R)}.
\end{tikzcd}
\]
This will be crucial in order to define the equivalence between the algebraic structures corresponding to the left and right gerbes of topological T-duality. We begin with the following, which is a particular case of the ``hofib/cyc adjunction'' of \cite{fiorenza2016tduality-cocycles,fiorenza2017tduality}, and whose proof in this specific case we give for the sake of completeness.
\begin{proposition}\label{1.2}
Let $(A,t_2)$ be a DGCA with a distinguished $2$-cocycle $t_2$. Then the association
\[
\begin{tikzcd}[column sep=0.3cm]
\mathrm{Hom}^{}_{\small{{}^{\K[x_2]/}\cat{DGCA}}}\left((\Abtfold,p_L),(A,\psi_{t_2})\right) \arrow[r]& \mathrm{Hom}_{\small{\cat{DGCA}}}\left(\K[x_3],A_{\{t_2\}}\right)\\
\varphi\arrow[r, maps to]&\tilde{\varphi}
\end{tikzcd}
\]
where $\tilde{\varphi}$ is defined by
\[
\tilde{\varphi}\colon x_3\mapsto\varphi(y_3)-e_1\varphi(x_{2R}),
\]
is a natural bijection. Clearly, everything identically works if we exchange $p_L$ with $p_R$ and $x_{2R}$ with $x_{2L}$.
\end{proposition}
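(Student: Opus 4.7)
My plan is to unpack the two hom-sets as concrete pieces of data and then read the inverse map directly off the given formula. On the left, a DGCA morphism $\varphi\colon \Abtfold\to A$ in the under category ${}^{\K[x_2]/}\cat{DGCA}$ is pinned down by the condition $\varphi(x_{2L})=t_2$, so the remaining data amount to an ordered pair $(s_2,h_3):=(\varphi(x_{2R}),\varphi(y_3))$ with $s_2\in A^2$ a $2$-cocycle and $h_3\in A^3$ satisfying $dh_3=t_2\,s_2$; this is the only nontrivial compatibility with the differential, forced by $dy_3=x_{2L}x_{2R}$.

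On the right, by the universal property of $\K[x_3]$, a DGCA map $\K[x_3]\to A_{\{t_2\}}$ is the same datum as a $3$-cocycle in $A_{\{t_2\}}$. Using the normal-form decomposition of Remark \ref{rem:projection} in degree $3$ (applicable since $n=2$ is even), every such element is uniquely $\alpha_3+e_1\beta_2$ with $\alpha_3\in A^3$ and $\beta_2\in A^2$; the Koszul sign from $|e_1|=1$ gives $d(\alpha_3+e_1\beta_2)=d\alpha_3+t_2\beta_2-e_1\,d\beta_2$, so closedness is equivalent to the pair of conditions $d\beta_2=0$ and $d\alpha_3=-t_2\beta_2$.

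The bijection then writes itself. Given $\varphi$ with data $(s_2,h_3)$, the element $\tilde\varphi(x_3)=h_3-e_1 s_2$ fits the normal form with $\alpha_3=h_3$ and $\beta_2=-s_2$, and the two closedness conditions specialise exactly to $ds_2=0$ and $dh_3=t_2\,s_2$, so $\tilde\varphi$ is well-defined. Conversely, I recover $\varphi$ from a $3$-cocycle $\alpha_3+e_1\beta_2$ by setting $\varphi(x_{2L})=t_2$, $\varphi(x_{2R})=-\beta_2$, $\varphi(y_3)=\alpha_3$ and extending multiplicatively; the single nontrivial compatibility $d\varphi(y_3)=\varphi(x_{2L}x_{2R})$ becomes $d\alpha_3=-t_2\beta_2$, which is precisely the cocycle condition. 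The two assignments are manifestly mutually inverse.

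Naturality in $(A,t_2)$ is automatic: both sides are determined by evaluation on finitely many generators, and in each case the formulas only involve postcomposition with the given DGCA map out of $A$, so a morphism $(A,t_2)\to(A',t_2')$ transports both descriptions by the same rule. The only delicate point is the Koszul sign produced when $e_1$ commutes past an even-degree element; once that is pinned down the verifications are bookkeeping, and I do not expect a genuine obstacle. The substantive content of the statement is therefore just the sign convention that identifies $\varphi(x_{2R})$ with $-\beta_2$ inside the normal form of the corresponding $3$-cocycle.
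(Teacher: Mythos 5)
Your proof is correct and takes essentially the same route as the paper's: unpack both hom-sets, use the unique decomposition of a degree-$3$ element of $A_{\{t_2\}}$ into a part in $A$ and a part divisible by $e_1$, and exhibit the explicit inverse assignment. The only difference is cosmetic — the paper absorbs the sign into the normal form (writing $t_3=a_3-e_1b_2$ so that $x_{2R}\mapsto b_2$), whereas you write $\alpha_3+e_1\beta_2$ and set $\varphi(x_{2R})=-\beta_2$ — and you are somewhat more explicit than the paper in matching the closedness conditions on the two sides.
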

\begin{proof}
We begin by showing that $\tilde{\varphi}(x_3)$ is a $3$-cocycle. If 
\[
\varphi\colon (\Abtfold,p_L)\to (A,\psi_{t_2})
\]
is a map in the undercategory ${}^{\K[x_2]/}\cat{DGCA}$ , then
\[
\varphi(x_{2L})=(\varphi\circ p_L)(x_2)=\psi_{t_2}(x_2)=t_2.
\]
Therefore,
\begin{align*}
d(\tilde{\varphi}(x_3))&=
d(\varphi(y_3)-e_1\varphi(x_{2R}))=\\
&=\varphi(x_{2L})\varphi(x_{2R})-t_2\varphi(x_{2R})+e_1\varphi(dx_{2R})=\\
& = 0.
\end{align*}
This shows that the map $\varphi\mapsto \tilde{\varphi}$ actually takes values in $\mathrm{Hom}_{\small{\cat{DGCA}}}(\K[x_3],A_{\{t_2\}})$. Now we define a map in the opposite direction. For a DGCA morphism $\psi\colon \K[x_3]\to A_{\{t_2\}}$, let $t_3$ be the $3$-cocycle $t_3=\psi(x_3)$ in $A_{\{t_2\}}$. The $3$-cocycle $t_3$ can be uniquely written as $t_3 = a_3-e_1b_2$ with $a_3,b_2\in A$. The association
\[
y_3 \mapsto a_3, \ \ \ x_{2R} \mapsto b_2, \ \ \ x_{2L} \mapsto t_2 
\]
defines a map $\tilde{\psi}\colon (\Abtfold,p_L)\to (A,t_2)$ in ${}^{\K[x_2]/}\cat{DGCA}$. It is immediate to check that $\tilde{\tilde{\varphi}}=\varphi$ and $\tilde{\tilde{\psi}}=\psi$, so the two maps are inverse each other.
\end{proof}

Now, let us go back to our DGCA $A$ endowed with a DGCA morphism $\Abtfold \overset{f}{\to} A$.
To avoid confusion, let us denote by $e_{1L}$ and $e_{1R}$ the additional degree 1 generators in  the extensions $A_L:=A_{f(x_{2L})}$ and  $A_R:=A_{f(x_{2R})}$ of $A$, respectively.
By the above proposition, and looking at $f$ both as a morphism from $(\Abtfold,p_L)$ to $(A,f_L)$ and as a morphism from $(\Abtfold,p_R)$ to $(A,f_R)$, we end up with distinguished $3$-cocycles
\[
f(y_3)-e_{1L}f(x_{2R}) \in A_L,\ \ \ f(y_3)-e_{1R}f(x_{2L}) \in A_R
\]
and again, we can define extensions of $A_L$ and $A_R$ by trivialising the above $3$-cocycles. We define the \emph{left rational gerbe} $\mathcal{G}_L$ and the \emph{right rational gerbe} $\mathcal{G}_R$ of the rational $T$-configuration $f$ 
as the DGCAs
\begin{align*}
\mathcal{G}_L &:= {{A_L}_{\{f(y_3)-e_{1L}f(x_{2R})\}}}\\
\mathcal{G}_R &:= {{A_R}_{\{f(y_3)-e_{1R}f(x_{2L})\}}}.
\end{align*}
Again, to avoid confusion, we denote by $\xi_{2L}$ and $\xi_{2R}$ the additional degree $2$ generators of $\mathcal{G}_L$ and $\mathcal{G}_R$ as extensions of $A_L$ and of $A_R$, respectively.
Both $\mathcal{G}_L$ and $\mathcal{G}_R$ are extensions of $A$ (since both $A_L$ and $A_R$ were extensions), and this tower of extensions of $A$ can be depicted in the diagram
\[
\begin{tikzcd}
\mathcal{G}_L &                                              &                                        &                                              & \mathcal{G}_R \\
              & A_L  \arrow[lu, "i_L"] &                                                & A_R  \arrow[ru, "i_R"'] &               \\
              &                                              & A \arrow[lu, "\iota_L"] \arrow[ru, "\iota_R"'] &                                              &              
\end{tikzcd}
\]
We can add to this diagram the DGCA $A_{LR}:= A_L \otimes_A A_R$, i.e., the DGCA $(A[e_{1L},e_{1R}], de_{1L}=f(x_{2L}),de_{1R}=f(x_{2R}))$, obtaining the diagram
\[
\begin{tikzcd}
\mathcal{G}_L &                                              & A_{LR}                                         &                                              & \mathcal{G}_R \\
              & A_L \arrow[ru, "\iota_R"'] \arrow[lu, "i_L"] &                                                & A_R \arrow[lu, "\iota_L"] \arrow[ru, "i_R"'] &               \\
              &                                              & A \arrow[lu, "\iota_L"] \arrow[ru, "\iota_R"'] &                                              &              
\end{tikzcd}
\]
where the central square commutes. As a matter of notation, in the above diagram we are writing $\iota_L$ (resp. $\iota_R$) wherever the extension is made by means of the $1$-form $e_{1L}$ (resp. $e_{1R}$) and $i_L$ (resp. $i_R$) whenever the extension is made by means of the $2$-form $\xi_{2L}$ (resp. $\xi_{2R}$).

We can extend $\mathcal{G}_L$ and $\mathcal{G}_R$ by computing the obvious (homotopy) pushouts to get the further extensions

\[\begin{tikzcd}
                                    & {\mathcal{G}_L}_{\{ f(x_{2R})\}}                     &                                                & {\mathcal{G}_R}_{\{ f(x_{2L})\}}                     &                                      \\
\mathcal{G}_L \arrow[ru, "\iota_R"'] &                                              & A_{LR} \arrow[ru, "i_R"'] \arrow[lu, "i_L"]    &                                              & \mathcal{G}_R \arrow[lu, "\iota_L"] \\
                                    & A_L \arrow[ru, "\iota_R"'] \arrow[lu, "i_L"] &                                                & A_R \arrow[lu, "\iota_L"] \arrow[ru, "i_R"'] &                                      \\
                                    &                                              & A \arrow[lu, "\iota_L"] \arrow[ru, "\iota_R"'] &                                              &                                     
\end{tikzcd}
\]
Explicitly,
\[
{\mathcal{G}_L}_{\{ f(x_{2R})\}} =
\left(A[e_{1L},e_{1R},\xi_{2L}], \begin{cases}
de_{1L}=f(x_{2L})\\
de_{1R}=f(x_{2R})\\
d\xi_{2L}=f(y_3)-e_{1L}f(x_{2R})
\end{cases}\right)
\]
and
\[
{\mathcal{G}_R}_{\{ f(x_{2L})\}} =
\left(A[e_{1L},e_{1R},\xi_{2R}], \begin{cases}
de_{1L}=f(x_{2L})\\
de_{1R}=f(x_{2R})\\
d\xi_{2R}=f(y_3)-e_{1R}f(x_{2L})
\end{cases}\right).
\]

We can now make explicit the iso part of our pull-iso-push transform.
\begin{proposition}
The DGCAs ${\mathcal{G}_L}_{\{ f(x_{2R})\}}$ and ${\mathcal{G}_R}_{\{ f(x_{2L})\}}$ are isomorphic via an isomorphism
\[
\begin{tikzcd}
{\mathcal{G}_L}_{\{ f(x_{2R})\}} \arrow[r, "\nu"] & {\mathcal{G}_R}_{\{ f(x_{2L})\}}
\end{tikzcd}
\]
that is the identity on $A_{LR}$ and acts as 
\[
\xi_{2L} \mapsto \xi_{2R}+\PPP,
\]
on the degree two generator. The inverse isomorphism is, clearly, $\nu^{-1}\colon \xi_{2R} \mapsto \xi_{2L}-\PPP$.
\end{proposition}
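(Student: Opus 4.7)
The plan is to verify that the proposed assignment defines a well-defined DGCA morphism by checking compatibility with the differential on the single additional generator; the rest is automatic because both algebras are free graded-commutative over the common sub-DGCA $A_{LR}$.

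More precisely, I first observe that, as graded-commutative $A_{LR}$-algebras, both ${\mathcal{G}_L}_{\{f(x_{2R})\}}$ and ${\mathcal{G}_R}_{\{f(x_{2L})\}}$ are obtained from $A_{LR}$ by freely adjoining a single degree 2 generator (namely $\xi_{2L}$ and $\xi_{2R}$, respectively). So there is a unique graded-algebra morphism $\nu$ extending the identity on $A_{LR}$ and mapping $\xi_{2L}$ to any prescribed element of degree 2 in ${\mathcal{G}_R}_{\{f(x_{2L})\}}$; in particular, the assignment $\xi_{2L}\mapsto \xi_{2R}+e_{1L}e_{1R}$ produces a well-defined graded-algebra morphism.

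Next, I would check that $\nu$ intertwines the two differentials. Since $\nu$ is the identity on $A_{LR}$, which contains $f(y_3)$, $f(x_{2R})$, $e_{1L}$, and all their products, the only nontrivial compatibility to check is $d(\nu(\xi_{2L}))=\nu(d\xi_{2L})$. The right-hand side is simply $f(y_3)-e_{1L}f(x_{2R})$, while a direct expansion of the left-hand side gives
\[
d(\xi_{2R}+e_{1L}e_{1R})=\bigl(f(y_3)-e_{1R}f(x_{2L})\bigr)+f(x_{2L})\,e_{1R}-e_{1L}\,f(x_{2R}),
\]
and here the middle two terms cancel because $f(x_{2L})$ has even degree, leaving exactly $f(y_3)-e_{1L}f(x_{2R})$, as desired. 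So $\nu$ is a DGCA morphism.

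Finally, the same reasoning applied to the assignment $\xi_{2R}\mapsto \xi_{2L}-e_{1L}e_{1R}$ produces a DGCA morphism in the opposite direction, and a one-line computation on the generators $\xi_{2L}$ and $\xi_{2R}$ shows that the two are mutually inverse. The main (and only) delicate point is the sign/parity check in the middle step; apart from that, everything reduces to the universal property of the free graded-commutative extension over $A_{LR}$.
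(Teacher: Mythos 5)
Your proposal is correct and follows essentially the same route as the paper: both reduce the claim to checking $d\nu(\xi_{2L})=\nu(d\xi_{2L})$ on the single adjoined generator, carry out the identical Leibniz-rule computation with the cancellation $f(x_{2L})e_{1R}-e_{1R}f(x_{2L})=0$, and exhibit the explicit inverse $\xi_{2R}\mapsto \xi_{2L}-e_{1L}e_{1R}$. The only difference is that you make the appeal to freeness over $A_{LR}$ slightly more explicit, which is a harmless elaboration of what the paper leaves implicit.
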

\begin{proof}
The map $\nu$ is a map of graded commutative algebras, and it is of course a bijection since an explicit inverse is given by the map of graded commutative algebras $\nu^{-1}$ which is the identity on $A_{LR}$ and sending $\xi_{2R}$ to $\xi_{2L}-e_{1L}e_{1R}$. To see that $\nu$ is a map of DGCAs we need to show that it is a map of chain complexes. This can be checked on the generators of the polynomial algebra ${\mathcal{G}_L}_{\{ f(x_{2R})\}}$, so we only need to compute $d\nu(\xi_{2L})$. We have
\begin{align*}
d\nu(\xi_{2L})&=
d(\xi_{2R}+e_{1L}e_{1R})=\\ 
&= f(y_3)-e_{1R}f(x_{2L}) +f(x_{2L})e_{1R}-e_{1L}f(x_{2R})=\\
&= f(y_3)-e_{1L}f(x_{2R})=\\
&= \nu(f(y_3)-e_{1L}f(x_{2R}))=\\
&= \nu(d\xi_{2L}), 
\end{align*}
where we used that $f(y_3)-e_{1L}f(x_{2R})\in A_{LR}$ and $\nu$ is the identity on $A_{LR}$.
\end{proof}
The isomorphisms $\nu$ and $\nu^{-1}$ complete our previous diagram to the commutative diagram
\[
\begin{tikzcd}
                                     & {\mathcal{G}_L}_{\{ f(x_{2R})\}} \arrow[bend left=10,rr, "\nu"]   &                                               & {\mathcal{G}_R}_{\{ f(x_{2L})\}}\arrow[bend left=10,ll, "\nu^{-1}"]                     &                                     \\
\mathcal{G}_L \arrow[ru, "\iota_R"'] &                                              & A_{LR} \arrow[lu, "i_L"] \arrow[ru, "i_R"']   &                                              & \mathcal{G}_R \arrow[lu, "\iota_L"] \\
                                     & A_L \arrow[ru, "\iota_R"'] \arrow[lu, "i_L"] &                                               & A_R \arrow[lu, "\iota_L"] \arrow[ru, "i_R"'] &                                     \\
                                     &                                              & A \arrow[lu, "\iota_L"] \arrow[ru, "\iota_R"'] &                                              &                                    
\end{tikzcd}
\]
\section{The graded Hori map from rational equivalences of gerbes}

All the maps and the DGCAs appearing in the upper part of our diagram
\[
\begin{tikzcd}
                                     & {\mathcal{G}_L}_{\{ f(x_{2R})\}} \arrow[rr, "\nu"] &  & {\mathcal{G}_R}_{\{ f(x_{2L})\}} &                                     \\
\mathcal{G}_L \arrow[ru, "\iota_R"'] &                                            &  &                          & \mathcal{G}_R \arrow[lu, "\iota_L"]
\end{tikzcd}
\]
can be extended to the rings of (bounded above) formal Laurent series in the degree $2$ generators.
For instance, as a graded commutative algebra the DGCA $\mathcal{G}_L$ is the polynomial algebra $A_L[\xi_{2L}]$ over $A_L$ and so embeds as a subalgebra into the ring of Laurent series
\[
\widehat{\mathcal{G}_L}:=A_L[[\xi_{2L}^{-1}]][\xi_{2L}]=: A_L[[\xi_{2L}^{-1},\xi_{2L}].
\]
The ring $\widehat{\mathcal{G}_L}$ has moreover a natural DGCA structure, by setting 
\[
d\xi_{2L}^{-1}=-\xi_{2L}^{-2}\left(f(y_3)-e_{1L}f(x_{2R})\right),
\]
making ${\mathcal{G}_L}\hookrightarrow \widehat{\mathcal{G}_L}$ an inclusion of DGCAs. One similarly extends the other DGCAs $\mathcal{G}_R, {\mathcal{G}_L}_{\{ f(x_{2R})\}}$ and ${\mathcal{G}_R}_{\{ f(x_{2L})\}}$ appearing in the above diagram.

The maps $\iota_R, \iota_L$ obviously extend to the rings of Laurent series. We denote by $\hat{\iota}_L$ and $\hat{\iota}_R$ these extensions. We notice that $\nu$ extends too, we only need to be careful in defining the extension $\hat{\nu}$. As $\PPP$ is nilpotent, this is done by using the formal power series inverse for $1-\eta$, i.e., by declaring that the action of $\hat{\nu}$ on $\xi_{2L}^{-1}$ is given by
\[
\hat{\nu}(\xi_{2L}^{-1})= (\xi_{2R}+\PPP)^{-1} = \sum_{i \geq 0}(-1)^i(\PPP)^i\xi_{2R}^{-i-1} = \xi_{2R}^{-1}-\PPP\xi_{2R}^{-2},
\]
where we used that $(\PPP)^2=0$. One easily checks that $\hat{\nu}$ is indeed a DGCA morphism: it is compatible with the relation $\xi_{2L}^{-1}\xi_{2L}=1$ as
\[
\hat{\nu}(\xi_{2L}^{-1})\hat{\nu}(\xi_{2L})=(\xi_{2R}^{-1}-\PPP\xi_{2R}^{-2})
(\xi_{2R}+\PPP)=1
\]
and with the differential as
\begin{align*} 
&\hat{\nu}(d\xi_{2L}^{-1})=\hat{\nu}(-\xi_{2L}^{-2}\left(f(y_3)-e_{1L}f(x_{2R})\right))\\
&=-(\xi_{2R}^{-1}-\PPP\xi_{2R}^{-2})^2(f(y_3)-e_{1L}f(x_{2R}))\\
&=-(\xi_{2R}^{-2}-2\PPP\xi_{2R}^{-3})(f(y_3)-e_{1L}f(x_{2R}))\\
&=-\xi_{2R}^{-2}f(y_3) +2\PPP\xi_{2R}^{-3} f(y_3)+\xi_{2R}^{-2}e_{1L}f(x_{2R})
\end{align*}
and
{\small\begin{align*}
&d\hat{\nu}(\xi_{2L}^{-1})=d(\xi_{2R}^{-1}-\PPP\xi_{2R}^{-2})
\\
&=-\xi_{2R}^{-2}\left(f(y_3)-e_{1R}f(x_{2L})\right)-(d\PPP)\xi_{2R}^{-2}+
2\PPP\xi_{2R}^{-3}\left(f(y_3)-e_{1R}f(x_{2L})\right)\\
&=-\xi_{2R}^{-2}\left(f(y_3)-e_{1R}f(x_{2L})\right)-(f(x_{2L})e_{1R}-e_{1L}f(x_{2R})
)\xi_{2R}^{-2}+
2\PPP\xi_{2R}^{-3}f(y_3)\\
&=-\xi_{2R}^{-2}f(y_3)+e_{1L}f(x_{2R})
\xi_{2R}^{-2}+
2\PPP\xi_{2R}^{-3}f(y_3)
\end{align*}}

As $f(x_{2L})$ is an even cocycle, by Remark \ref{rem:projection} we have a projection
\[
\pi\colon {\mathcal{G}_R}_{\{ f(x_{2L})\}} \to {\mathcal{G}_R}[-1]
\]
mapping $\alpha_k+e_{1L} \beta_{k-1}$ to $\beta_{k-1}$, which is a morphism of right DG-${\mathcal{G}_R}$-modules. 
Also the projection $\pi\colon \colon {\mathcal{G}_R}_{\{ f(x_{2L})\}} \to {\mathcal{G}_R}[-1]$ naturally extends to formal Laurent series modules to a map
\[
\hat{\pi}\colon \widehat{{\mathcal{G}_R}_{\{ f(x_{2L})\}}} \to \widehat{{\mathcal{G}_R}}[-1]
\]
and so it is possible to build a pull-iso-push transform $\mathcal{T}_{L \to R}$ as the composition
\[
\begin{tikzcd}
                                                                             & \widehat{{\mathcal{G}_L}_{\{ f(x_{2R})\}}} \arrow[rr, "\hat{\nu}"] &  & \widehat{{\mathcal{G}_R}_{\{ f(x_{2L})\}}} \arrow[rd, "\hat{\pi}"'] &                     \\
\widehat{\mathcal{G}_L} \arrow[ru, "\hat{\iota}_R"'] \arrow[rrrr, "\mathcal{T}_{L \to R}"] &                                                            &  &                                                          & \widehat{\mathcal{G}_R}[-1].
\end{tikzcd}
\]

\medskip

The transform 
\[
\mathcal{T}_{L\to R}\colon \widehat{\mathcal{G}_L} \to \widehat{\mathcal{G}_R}[-1]
\]
associated to the initial rational $T$-duality configuration $\Abtfold \overset{f}{\to} A$ is seen to coincide with the graded Hori map introduced by  Han and Mathai in \cite{Han-Mathai}. Namely, the action of $\hat{\nu}$ on a generic degree $k$ element 
\[
\omega_k=\sum_{n\in \mathbb{Z}}(\alpha_{2n+k}+e_{1L}\beta_{2n+k-1}+e_{1R}\gamma_{2n+k-1}+\PPP\delta_{2n+k-2})\xi_{2L}^{-n}
\]
in $\widehat{{\mathcal{G}_L}_{\{ f(x_{2R})\}}}$ is given by

{\footnotesize\begin{align*}
&\hat{\nu}(\omega_k)=
\sum_{n\in \mathbb{Z}}(\alpha_{2n+k}+e_{1L}\beta_{2n+k-1}+e_{1R}\gamma_{2n+k-1}+\PPP\delta_{2n+k-2})\hat{\nu}(\xi_{2L}^{-n})=\\ &=
\sum_{n\in \mathbb{Z}}(\alpha_{2n+k}+e_{1L}\beta_{2n+k-1}+e_{1R}\gamma_{2n+k-1}+\PPP\delta_{2n+k-2})(\xi_{2R}^{-n}-n\PPP\xi_{2R}^{-n-1}) =\\
&= 
\sum_{n\in \mathbb{Z}}(\alpha_{2n+k}+e_{1L}\beta_{2n+k-1}+e_{1R}\gamma_{2n+k-1}+\PPP\delta_{2n+k-2})\xi_{2R}^{-n}-n\PPP\alpha_{2n+k}\xi_{2R}^{-n-1}=\\
&= 
\sum_{n\in \mathbb{Z}}(\alpha_{2n+k}+e_{1L}\beta_{2n+k-1}+e_{1R}\gamma_{2n+k-1}+\PPP(\delta_{2n+k-2}-(n-1)a_{2n+k-2}))\xi_{2R}^{-n},
\end{align*}}

hence the action of $\hat{\nu}$ on the coefficients of a generic degree $k$ Laurent series in $\widehat{{\mathcal{G}_L}_{\{ f(x_{2R})\}}}$ is given by
\[
\begin{pmatrix}\alpha_{2n+k} \\ \beta_{2n+k-1} \\ \gamma_{2n+k-1} \\ \delta_{2n+k-2}\end{pmatrix} \overset{\hat{\nu}}{\mapsto} \begin{pmatrix}\alpha_{2n+k} \\ \beta_{2n+k-1} \\ \gamma_{2n+k-1} \\ \delta_{2n+k-2}-(n-1)\alpha_{2n+k-2}\end{pmatrix}
\]
The inclusion $\widehat{\mathcal{G}_L} \overset{\hat{\iota_R}}{\to} \widehat{{\mathcal{G}_L}_{\{ f(x_{2R})\}}}$ and the projection $\hat{\pi}\colon \widehat{{\mathcal{G}_R}_{\{ f(x_{2L})\}}}\to \widehat{{\mathcal{G}_R}}[-1]$
can be displayed in a similar way 
\[
\begin{pmatrix}\alpha_{2n+k} \\ b_{2n+k-1}\end{pmatrix} \overset{\hat{\iota_R}}{\mapsto} \begin{pmatrix}\alpha_{2n+k} \\ \beta_{2n+k-1} \\ 0 \\ 0\end{pmatrix}
\qquad;\qquad 
\begin{pmatrix}\alpha_{2n+k} \\ \beta_{2n+k-1} \\ \gamma_{2n+k-1} \\ \delta_{2n+k-2}\end{pmatrix} \overset{\hat{\pi}}{\mapsto} \begin{pmatrix}\beta_{2n+k-1} \\ \delta_{2n+k-2}\end{pmatrix}    
\]
The left-to-right transform $\mathcal{T}_{L\to R}$ therefore act on the coefficients of a generic degree $k$ element $\sum_{n \in \Z}(\alpha_{2n+k}+e_{1L}\beta_{2n+k-1})\xi_{2L}^{-n}\in 
\widehat{\mathcal{G}_L}$ as
\[
\begin{pmatrix}\alpha_{2n+k} \\ \beta_{2n+k-1}\end{pmatrix} \overset{\hat{\iota_R}}{\mapsto} \begin{pmatrix}\alpha_{2n+k} \\ \beta_{2n+k-1} \\ 0 \\ 0\end{pmatrix}\overset{\hat{\nu}}{\mapsto} \begin{pmatrix}\alpha_{2n+k} \\ \beta_{2n+k-1} \\ 0 \\ -(n-1)\alpha_{2n+k-2}\end{pmatrix} \overset{\hat{\pi}}{\mapsto} \begin{pmatrix}\beta_{2n+k-1} \\ -(n-1)\alpha_{2n+k-2}\end{pmatrix}
\]
i.e., it acts on the degree $k$ element $\sum_{n\in \mathbb{Z}}(\alpha_{2n+k}+e_{1L}b_{2n+k-1})\xi_{2L}^{-n} \in \widehat{\mathcal{G}_L}$ 
as
\begin{align*}
   \sum_{n\in \mathbb{Z}}(\alpha_{2n+k}+e_{1L}\beta_{2n+k-1})\xi_{2L}^{-n} &\mapsto \sum_{n \in \Z}\left(\beta_{2n+k-1}-(n-1)e_{1R}\alpha_{2n+k-2}\right)\xi_{2R}^{-n}\\
   &= \sum_{n \in \Z}\beta_{2n+k-1}\xi_{2R}^{-n}+e_{1R}\sum_{n \in \Z}-n\alpha_{2n+k}\xi_{2R}^{-n-1}
\end{align*}
The above expressions can be conveniently packaged by introducing, for every sequence $\{\eta_{2n+k}\}_{n\in \mathbb{Z}}$ of elements of $A$ with $\deg(\eta_{2n+k})=2n+k$, the Laurent series in a degree $2$ variable $\xi$
\[
\eta_{(k)}(\xi)=\sum_{n\in \mathbb{Z}}\eta_{2n+k}\xi^{-n}.
\]
We have manifest isomorphisms of graded vector spaces
\[
\begin{tikzcd}
\widehat{{\mathcal{G}_L}_{\{ f(x_{2R})\}}} & \left(\begin{matrix}A[[\xi^{-1},\xi]\\
\oplus\\ A[[\xi^{-1},\xi][-1]\\ \oplus \\ A[[\xi^{-1},\xi][-1]\\ \oplus \\ A[[\xi^{-1},\xi][-2] \end{matrix}\right)
\arrow[l, "\sim"']\arrow[r, "\sim"] & \widehat{{\mathcal{G}_R}_{\{ f(x_{2L})\}}}
\end{tikzcd}
\]
and
\[
\begin{tikzcd}
\widehat{{\mathcal{G}_L}} & \left(\begin{matrix}A[[\xi^{-1},\xi] \\ \oplus \\
A[[\xi^{-1},\xi][-1]\end{matrix}\right)
\arrow[l, "\sim"']\arrow[r, "\sim"] & \widehat{{\mathcal{G}_R}}.
\end{tikzcd}
\]
In terms of these isomorphisms, the maps $\hat{\nu}$, $\hat{\iota}_R$ and $\hat{\pi}$ are represented by the following matrices:
\[
\hat{\nu}\mapsto 
\left(
\begin{matrix}
1 & 0 & 0 &0\\
0 & 1 & 0 &0\\
0 & 0 & 1 &0\\
\frac{d}{dz} & 0 & 0 &1\\
\end{matrix}
\right); \qquad 
\hat{\iota}_R\mapsto 
\left(
\begin{matrix}
1 & 0 \\
0 & 1\\
0 & 0\\
0 & 0
\end{matrix}
\right);\qquad 
\hat{\pi}\mapsto 
\left(
\begin{matrix}
0 & 1 & 0 &0\\
0 & 0 & 0 &1
\end{matrix}
\right),
\]
so that the graded left-to-right Hori transform $\mathcal{T}_{L\to R}$ is represented in matrix form as: 
\[
\mathcal{T}_{L\to R}\mapsto 
\left(
\begin{matrix}
0 & 1 \\
\frac{d}{d\xi} & 0 
\end{matrix}
\right).
\]
One similarly defines the right-to-left Hori transform $\mathcal{T}_{R\to L}$. As 
\[
\left(
\begin{matrix}
0 & 1 \\
\frac{d}{d\xi} & 0 
\end{matrix}
\right)\left(
\begin{matrix}
0 & 1 \\
\frac{d}{d\xi} & 0 
\end{matrix}
\right)=
\left(
\begin{matrix}
\frac{d}{d\xi} & 0 \\
0 &\frac{d}{d\xi}  
\end{matrix}
\right)
\]
one sees that
\[
\mathcal{T}_{R\to L}\circ \mathcal{T}_{L\to R} = \frac{d}{d\xi_{2L}}\colon \widehat{{\mathcal{G}_L}}\to \widehat{{\mathcal{G}_L}}[-2]
\]
and 
\[
\mathcal{T}_{L\to R}\circ \mathcal{T}_{R\to L} = \frac{d}{d\xi_{2R}}\colon \widehat{{\mathcal{G}_R}}\to \widehat{{\mathcal{G}_R}}[-2].
\]

\section{Hori transforms of meromorphic functions}\label{sec:meromorphic} Before extending the ring of coefficients to the ring of Jacobi forms we start with a one variable intermediate step. We will need an extra degree $2$ variable in order to keep the following computations within the context of graded maps. So we assume that our base DGCA $A$ is of the form
\[
A:=A_0[u^{-1},u]]
\]
where $u$ is a degree $2$ variable and $A_0$ is a DGCA endowed with a rational T-duality configuration $\Abtfold \overset{f}{\to} A_0$. Notice the a T-duality configuration on $A_0$ induces a T-duality configuration 
\[
\Abtfold \overset{f}{\to} A_0 \hookrightarrow A
\]
on $A$ simply by composing $f$ with the inclusion $A_0 \hookrightarrow A$. All the extension and gerbes below are computed with respect to this T-duality configuration on $A$. For instance, the extended left gerbe $\widehat{\mathcal{G}_L}$ will be
\[
\widehat{\mathcal{G}_L} =\hskip -20pt \LaTeXoverbrace{A_0}^{\text {starting DGCA}} \hskip -15pt \LaTeXunderbrace{[u^{-1},u]]}_{\vbox{\hsize 1.2 cm \scriptsize \noindent additional variable}} \hskip -20pt\LaTeXoverbrace{[e_{1L}]}^{\text{trivialise $f(x_{2L})$}} \hskip -27 pt\LaTeXunderbrace{[[\xi_{2L}^{-1},\xi_{2L}]}_{\vbox{\hsize 2.3 cm \scriptsize \noindent trivialise  $f(y_3)-e_{1L}f(x_{2R})$ and extend to Laurent series}}\hskip -10pt.
\]

Assume now the base field $\K$ to be the field $\C$ of complex numbers and let  $\mathcal{M}_0$ be the $\C$-algebra of meromorphic functions on $\C$ that are holomorphic on the punctured plane $\C\setminus\{0\}$, i.e. meromorphic functions that admit at most a polar singularity in the origin. By looking at the algebra $\mathcal{M}_0$ as a DGCA concentrated in degree zero, we can then consider the DGCA $\mathcal{M}_0(A_0):= \mathcal{M}_0\otimes A_0$, that we will call the DGCA of meromorphic functions with values in $A_0$ and with at most polar singularities in the origin. A degree $k$ element in $\mathcal{M}_0(A_0)$ has a Laurent series expansion around the origin of the form
\[
f(q) = \sum_{n}f_{n;k}q^{n}
\]
where the $f_{n;k}$ are degree $k$ elements in $A_0$, with $f_{n;k} = 0$ for $n \ll 0$. For any $i\in \Z$ we have an isomorphism $\mu_{2i}$ of graded vector spaces
\begin{align*} 
\mathcal{M}_0(A_0) \xrightarrow{\mu_{2i}} & {A[[\xi^{-1},\xi][2i]}\\
f(q) &\mapsto \xi^{i}f(u\xi^{-1})
\end{align*}
Notice that there exists a commutative diagram
\[
\begin{tikzcd}
\mathcal{M}_0(A_0) \arrow[r, "-q\frac{d}{dq}"] \arrow[d, "\mu_0"'] & \mathcal{M}_0(A_0) \arrow[d, "\mu_{-2}"] \\
{A[[\xi^{-1},\xi]} \arrow[r, "\frac{d}{d\xi}"]                    & {A[[\xi^{-1},\xi][-2]}                  
\end{tikzcd}
\]
As remarked at the end of the previous section, the natural isomorphism of graded vector spaces of $\widehat{\mathcal{G}_L}$ and $\widehat{\mathcal{G}_R}$ with $A[[\xi^{-1},\xi] \oplus A[[\xi^{-1},\xi][-1]$ identifies the graded Hori map $\mathcal{T}_{L \to R}$ with the antidiagonal matrix $\left(\begin{smallmatrix}
 0 & 1 \\
\frac{d}{d\xi} & 0  \end{smallmatrix}\right)$, i.e., we have a commutative diagram
\[
\begin{tikzcd}[ampersand replacement=\&]
A[[\xi^{-1},\xi]\oplus A[[\xi^{-1},\xi][-1] \arrow[rr, "{\left(\begin{smallmatrix}
 0 & 1 \\
\frac{d}{d\xi} & 0  \end{smallmatrix}\right)}"]    \arrow[d, "\wr"]                        \&\& A[[\xi^{-1},\xi][-1]\oplus A[[\xi^{-1},\xi][-2]   \arrow[d, "\wr"]   \\
\widehat{\mathcal{G}_L} \arrow[rr, "\mathcal{T}_{L\to R}"]                           \&\& \widehat{\mathcal{G}_R}[-1]                                     
\end{tikzcd}
\]
Therefore, we see that the graded Hori map $\mathcal{T}_{L \to R}$ participates into a commutative diagram of graded vector spaces
\[
\begin{tikzcd}[ampersand replacement=\&]
\mathcal{M}_0(A_0)\oplus \mathcal{M}_0(A_0)[-1] \arrow[rr, "{\left(\begin{smallmatrix}
 0 & 1 \\
-q\frac{d}{dq} & 0  \end{smallmatrix}\right)}"] \arrow[d, "{\mu_0\oplus \mu_0[-1]}"', "\wr"] \&                                       \& \mathcal{M}_0(A_0)[-1]\oplus \mathcal{M}_0(A_0) \arrow[d, "\wr"', "{\mu_0[-1]\oplus \mu_{-2}}"] \\
A[[\xi^{-1},\xi]\oplus A[[\xi^{-1},\xi][-1] \arrow[rr, "{\left(\begin{smallmatrix}
 0 & 1 \\
\frac{d}{d\xi} & 0  \end{smallmatrix}\right)}"]    \arrow[d, "\wr"]                        \&\& A[[\xi^{-1},\xi][-1]\oplus A[[\xi^{-1},\xi][-2]   \arrow[d, "\wr"]   \\
\widehat{\mathcal{G}_L} \arrow[rr, "\mathcal{T}_{L\to R}"]                           \&\& \widehat{\mathcal{G}_R}[-1]                                     
\end{tikzcd}
\]
The same happens for the graded Hori map $\mathcal{T}_{R \to L}$, so that 
the composition $\mathcal{T}_{R \to L}\circ \mathcal{T}_{L \to R}$
is identified with the endomorphism
\[
\left(
\begin{matrix}
-q\frac{d}{dq} &0\\
0 & -q\frac{d}{dq}
\end{matrix} 
\right)
\]
of $\mathcal{M}_0(A_0)\oplus \mathcal{M}_0(A_0)[-1]$, and similarly for $\mathcal{T}_{L \to R}\circ \mathcal{T}_{R \to L}$.

\section{Extending coefficients to the ring of Jacobi forms}
In this concluding section we extend the ring of coefficients for our extended gerbes to the graded ring of Jacobi forms of index $0$. We address the reader to the classic \cite{eichler1985theory} for a complete and detailed account of the general theory of 
Jacobi forms of arbitrary index, and here we content us in briefly recalling the definition of a (meromorphic) Jacobi form of index 0.
\begin{definition}
A \emph{(meromorphic) Jacobi form of weight $s$ and index $0$} is a function
\[
\begin{tikzcd}
\C \times \mathbb{H} \arrow[r, "J"] & \C
\end{tikzcd}
\]
which is meromorphic in the variable $z$ and holomorphic in the variable $\tau$, such that $J$ 
\begin{itemize}
    \item is modular in $\tau$ i.e. $J(\frac{z}{c\tau+d},\frac{a\tau+b}{c\tau+d}) = (c\tau+d)^sJ(z,\tau)$ for any $\left(\begin{smallmatrix} a & b \\ c & d\end{smallmatrix}\right)$ in $SL(2,\Z)$;
    \item is elliptic in $z$ i.e. $J(z+\lambda \tau+\mu,\tau) =J(z,\tau)$ for any $(\lambda, \mu)$ in $\Z^2$;
    \item has a polar behaviour for $z\to +i\infty$.
\end{itemize}
\end{definition}

We notice two important features of Jacobi forms.
First, by applying the operator $\partial/\partial z$ to both sides of the modularity and of the ellipticity equations, one sees  that if $J(z,\tau)$ is a Jacobi form of weight $s$ and index $0$ then $\frac{\partial }{\partial z}J(z,\tau)$ is a Jacobi form of weight $s+1$ and index $0$.
\par
Secondly, from the ellipticity condition for the pair $(0,1)\in \Z^2$ one sees that every Jacobi form is periodic in $z$ of period $1$,
hence it has a series expansion in the variable $q= e^{2\pi i z}$ of the form
\[
J(z,\tau) = \sum_{n =n_0}^\infty\alpha_n(\tau)q^{n},
\]
for some $n_0\in \mathbb{Z}$, where the fact that this Laurent series is bounded below is a consequence of the polar behaviour of $J$ for $z\to +i\infty$.
\par
As the weight $s$ ranges over the integers, Jacobi form of index $0$
form a graded ring 
\[
\mathcal{J}_0 =\bigoplus_{s\in \Z}\mathcal{J}_0(s),
\]
(with degree given by the weight). The fact that $\frac{\partial }{\partial z}$ maps weight $s$ index $0$ Jacobi forms to weight $s+1$ index $0$ Jacobi forms then can be expressed by saying that $\frac{\partial }{\partial z}$ is a degree 1 derivation of the graded ring $\mathcal{J}_0$. Moreover, from the identity
\[
-q\dfrac{\partial}{\partial q} = -\dfrac{1}{2\pi i}\dfrac{\partial}{\partial z}
\]
we see that the ring of $q$-expansions of index $0$ Jacobi forms (a subring of the ring of bounded below Laurent series in the variable $q$ with coefficients in the ring of holomorphic function on $\mathbb{H}$) is closed under the action of the operator $-q\dfrac{\partial}{\partial q}$. 
\par
We can now verbatim repeat the construction of Section \ref{sec:meromorphic}. For a DGCA $B$ over $\mathbb{C}$, let us write $B(\tau)$ for the DGCA
\[
B(\tau):=B\otimes \mathcal{H}ol(\mathbb{H})
\]
where the ring $\mathcal{H}ol(\mathbb{H})$ of holomorphic function in the variable $\tau\in \mathbb{H}$ is seen as a DGCA concentrated in degree zero.
Also, let us write 
\[
\mathcal{J}_0(A_0) =\bigoplus_{s\in \Z}\mathcal{J}_0^{(s)}(A_0)
\]
for the bigraded ring $\mathcal{J}_0(A_0):=\mathcal{J}_0\otimes A_0$ of index $0$ Jacobi forms with values in a DGCA $A_0$. Then the commutative diagram at the end of Section \ref{sec:meromorphic} induces the commutative diagram
\[
\begin{tikzcd}[ampersand replacement=\&, column sep=small]
\mathcal{J}_0^{(s_1)}(A_0) \oplus \mathcal{J}_0^{(s_2)}(A_0)[-1] 
\arrow[rr, "{\left(\begin{smallmatrix}
 0 & 1 \\
-\frac{1}{2\pi i}\frac{\partial}{\partial z} & 0  \end{smallmatrix}\right)}"] 
\arrow[d, "q\text{-expansion}"']
\&\& \mathcal{J}_0^{(s_2)}(A_0)[-1] \oplus \mathcal{J}_0^{(s_1+1)}(A_0)
\arrow[d, "q\text{-expansion}"]
\\
\mathcal{M}_0(A_0(\tau))\oplus \mathcal{M}_0(A_0(\tau))[-1] \arrow[rr, "{\left(\begin{smallmatrix}
 0 & 1 \\
-q\frac{\partial}{\partial q} & 0  \end{smallmatrix}\right)}"] \arrow[d, "{\mu_0\oplus \mu_0[-1]}"', "\wr"] \&                                       \& \mathcal{M}_0(A_0(\tau))[-1]\oplus \mathcal{M}_0(A_0(\tau)) \arrow[d, "\wr"', "{\mu_0[-1]\oplus \mu_{-2}}"] \\
A(\tau)[[\xi^{-1},\xi]\oplus A(\tau)[[\xi^{-1},\xi][-1] \arrow[rr, "{\left(\begin{smallmatrix}
 0 & 1 \\
\frac{d}{d\xi} & 0  \end{smallmatrix}\right)}"]    \arrow[d, "\wr"]                        \&\& A(\tau)[[\xi^{-1},\xi][-1]\oplus A(\tau)[[\xi^{-1},\xi][-2]   \arrow[d, "\wr"']   \\
\widehat{\mathcal{G}_L}(\tau) \arrow[rr, "\mathcal{T}_{L\to R}"]                           \&\& \widehat{\mathcal{G}_R}(\tau)[-1]                                     
\end{tikzcd}
\]
That is, the graded Hori transform $\mathcal{T}_{L\to R}$ induces the
morphism
\[
\left(\begin{matrix}
 0 & 1 \\
-\frac{1}{2\pi i}\frac{\partial}{\partial z} & 0  \end{matrix}\right)
\colon 
\mathcal{J}_0^{(s_1)}(A_0) \oplus \mathcal{J}_0^{(s_2)}(A_0)[-1]
\to
\mathcal{J}_0^{(s_2)}(A_0)[-1] \oplus \mathcal{J}_0^{(s_1+1)}(A_0)
\]
at the level of index $0$ $A_0$-valued Jacobi forms, for any weights $s_1,s_2$ in $\mathbb{Z}$. The same holds for the graded Hori transform $\mathcal{T}_{R\to L}$, so that the composition $\mathcal{T}_{L\to R}\circ \mathcal{T}_{R\to L}$ acts as
{\small\[
\left(\begin{matrix}
-\frac{1}{2\pi i}\frac{\partial}{\partial z} & 0  \\
0 & -\frac{1}{2\pi i}\frac{\partial}{\partial z}\end{matrix}\right)
\colon 
\mathcal{J}_0^{(s_1)}(A_0) \oplus \mathcal{J}_0^{(s_2)}(A_0)[-1]
\to
\mathcal{J}_0^{(s_1+1)}(A_0) \oplus \mathcal{J}_0^{(s_2+1)}(A_0)[-1]
\]}
and similarly for $\mathcal{T}_{L \to R}\circ \mathcal{T}_{R \to L}$. This reproduces \cite[Theorem 2.2]{Han-Mathai}.

\subsection*{Acknowledgements} d.f thanks NYU-AD for support on occasion of the workshop {\it M-theory and Mathematics} during which the idea of this note originated, and Hisham Sati and Urs Schreiber for discussions and comments on an early version of this note.

\nocite{*}
\bibliographystyle{alpha}
\bibliography{bibliography}
\end{document}